\newtheorem{theorem}{Theorem}
\theoremstyle{plain}
\newtheorem{axiom}{Axiom}
\newtheorem{conjecture}{Conjecture}
\newtheorem{corollary}{Corollary}
\newtheorem{definition}{Definition}
\newtheorem{example}{Example}
\newtheorem{exercise}{Exercise}
\newtheorem{lemma}{Lemma}
\newtheorem{proposition}{Proposition}
\newtheorem{remark}{Remark}
\numberwithin{equation}{section}
\chardef\@x10\chardef\@xv60
\def\tcitime{
\def\@time{%
  \@minute\time\@hour\@minute\divide\@hour\@xv
  \ifnum\@hour<\@x 0\fi\the\@hour:%
  \multiply\@hour\@xv\advance\@minute-\@hour
  \ifnum\@minute<\@x 0\fi\the\@minute
  }}%
\def\QCTOpt[#1]#2{%
  \def\QCTOptB{#1}
  \def\QCTOptA{#2}
}
\def\QCTNOpt#1{%
  \def\QCTOptA{#1}
  \let\QCTOptB\empty
}
\def\Qct{%
  \@ifnextchar[{%
    \QCTOpt}{\QCTNOpt}
}
\def\QCBOpt[#1]#2{%
  \def\QCBOptB{#1}
  \def\QCBOptA{#2}
}
\def\QCBNOpt#1{%
  \def\QCBOptA{#1}
  \let\QCBOptB\empty
}
\def\Qcb{%
  \@ifnextchar[{%
    \QCBOpt}{\QCBNOpt}
}
\def\PrepCapArgs{%
  \ifx\QCBOptA\empty
    \ifx\QCTOptA\empty
      {}%
    \else
      \ifx\QCTOptB\empty
        {\QCTOptA}%
      \else
        [\QCTOptB]{\QCTOptA}%
      \fi
    \fi
  \else
    \ifx\QCBOptA\empty
      {}%
    \else
      \ifx\QCBOptB\empty
        {\QCBOptA}%
      \else
        [\QCBOptB]{\QCBOptA}%
      \fi
    \fi
  \fi
}
\def\GRAPHICSPS#1{%
 \ifcase\GRAPHICSTYPE
   \special{ps: #1}%
 \or
   \special{language "PS", include "#1"}%
 \fi
}%
\def\graffile#1#2#3#4{%
    \leavevmode
    \raise -#4 \BOXTHEFRAME{%
        \hbox to #2{\raise #3\hbox to #2{\null #1\hfil}}}%
}%
\def\draftbox#1#2#3#4{%
 \leavevmode\raise -#4 \hbox{%
  \frame{\rlap{\protect\tiny #1}\hbox to #2%
   {\vrule height#3 width\z@ depth\z@\hfil}%
  }%
 }%
}%
\newif\ifwasdraft
\def\GRAPHIC#1#2#3#4#5{%
 \ifnum\draft=\@ne\draftbox{#2}{#3}{#4}{#5}%
  \else\graffile{#1}{#3}{#4}{#5}%
  \fi
 }%
\def\addtoLaTeXparams#1{%
    \edef\LaTeXparams{\LaTeXparams #1}}%
\newif\ifBoxFrame \BoxFramefalse
\newif\ifOverFrame \OverFramefalse
\newif\ifUnderFrame \UnderFramefalse
\def\BOXTHEFRAME#1{%
   \hbox{%
      \ifBoxFrame
         \frame{#1}%
      \else
         {#1}%
      \fi
   }%
}
\def\doFRAMEparams#1{\BoxFramefalse\OverFramefalse\UnderFramefalse\readFRAMEparams#1\end}%
\def\readFRAMEparams#1{%
 \ifx#1\end%
  \let\next=\relax
  \else
  \ifx#1i\dispkind=\z@\fi
  \ifx#1d\dispkind=\@ne\fi
  \ifx#1f\dispkind=\tw@\fi
  \ifx#1t\addtoLaTeXparams{t}\fi
  \ifx#1b\addtoLaTeXparams{b}\fi
  \ifx#1p\addtoLaTeXparams{p}\fi
  \ifx#1h\addtoLaTeXparams{h}\fi
  \ifx#1X\BoxFrametrue\fi
  \ifx#1O\OverFrametrue\fi
  \ifx#1U\UnderFrametrue\fi
  \ifx#1w
    \ifnum\draft=1\wasdrafttrue\else\wasdraftfalse\fi
    \draft=\@ne
  \fi
  \let\next=\readFRAMEparams
  \fi
 \next
 }%
\def\IFRAME#1#2#3#4#5#6{%
      \bgroup
      \let\QCTOptA\empty
      \let\QCTOptB\empty
      \let\QCBOptA\empty
      \let\QCBOptB\empty
      #6%
      \parindent=0pt%
      \leftskip=0pt
      \rightskip=0pt
      \setbox0 = \hbox{\QCBOptA}%
      \@tempdima = #1\relax
      \ifOverFrame
          \typeout{This is not implemented yet}%
          \show\HELP
      \else
         \ifdim\wd0>\@tempdima
            \advance\@tempdima by \@tempdima
            \ifdim\wd0 >\@tempdima
               \textwidth=\@tempdima
               \setbox1 =\vbox{%
                  \noindent\hbox to \@tempdima{\hfill\GRAPHIC{#5}{#4}{#1}{#2}{#3}\hfill}\\%
                  \noindent\hbox to \@tempdima{\parbox[b]{\@tempdima}{\QCBOptA}}%
               }%
               \wd1=\@tempdima
            \else
               \textwidth=\wd0
               \setbox1 =\vbox{%
                 \noindent\hbox to \wd0{\hfill\GRAPHIC{#5}{#4}{#1}{#2}{#3}\hfill}\\%
                 \noindent\hbox{\QCBOptA}%
               }%
               \wd1=\wd0
            \fi
         \else
            \ifdim\wd0>0pt
              \hsize=\@tempdima
              \setbox1 =\vbox{%
                \unskip\GRAPHIC{#5}{#4}{#1}{#2}{0pt}%
                \break
                \unskip\hbox to \@tempdima{\hfill \QCBOptA\hfill}%
              }%
              \wd1=\@tempdima
           \else
              \hsize=\@tempdima
              \setbox1 =\vbox{%
                \unskip\GRAPHIC{#5}{#4}{#1}{#2}{0pt}%
              }%
              \wd1=\@tempdima
           \fi
         \fi
         \@tempdimb=\ht1
         \advance\@tempdimb by \dp1
         \advance\@tempdimb by -#2%
         \advance\@tempdimb by #3%
         \leavevmode
         \raise -\@tempdimb \hbox{\box1}%
      \fi
      \egroup%
}%
\def\DFRAME#1#2#3#4#5{%
 \begin{center}
     \let\QCTOptA\empty
     \let\QCTOptB\empty
     \let\QCBOptA\empty
     \let\QCBOptB\empty
     \ifOverFrame 
        #5\QCTOptA\par
     \fi
     \GRAPHIC{#4}{#3}{#1}{#2}{\z@}
     \ifUnderFrame 
        \nobreak\par #5\QCBOptA
     \fi
 \end{center}%
 }%
\def\FFRAME#1#2#3#4#5#6#7{%
 \begin{figure}[#1]%
  \let\QCTOptA\empty
  \let\QCTOptB\empty
  \let\QCBOptA\empty
  \let\QCBOptB\empty
  \ifOverFrame
    #4
    \ifx\QCTOptA\empty
    \else
      \ifx\QCTOptB\empty
        \caption{\QCTOptA}%
      \else
        \caption[\QCTOptB]{\QCTOptA}%
      \fi
    \fi
    \ifUnderFrame\else
      \label{#5}%
    \fi
  \else
    \UnderFrametrue%
  \fi
  \begin{center}\GRAPHIC{#7}{#6}{#2}{#3}{\z@}\end{center}%
  \ifUnderFrame
    #4
    \ifx\QCBOptA\empty
      \caption{}%
    \else
      \ifx\QCBOptB\empty
        \caption{\QCBOptA}%
      \else
        \caption[\QCBOptB]{\QCBOptA}%
      \fi
    \fi
    \label{#5}%
  \fi
  \end{figure}%
 }%
\def\makeactives{
  \catcode`\"=\active
  \catcode`\;=\active
  \catcode`\:=\active
  \catcode`\'=\active
  \catcode`\~=\active
}
   \gdef\activesoff{%
      \def"{\string"}
      \def;{\string;}
      \def:{\string:}
      \def'{\string'}
      \def~{\string~}
    }
\def\FRAME#1#2#3#4#5#6#7#8{%
 \bgroup
 \@ifundefined{bbl@deactivate}{}{\activesoff}
 \ifnum\draft=\@ne
   \wasdrafttrue
 \else
   \wasdraftfalse%
 \fi
 \def\LaTeXparams{}%
 \dispkind=\z@
 \def\LaTeXparams{}%
 \doFRAMEparams{#1}%
 \ifnum\dispkind=\z@\IFRAME{#2}{#3}{#4}{#7}{#8}{#5}\else
  \ifnum\dispkind=\@ne\DFRAME{#2}{#3}{#7}{#8}{#5}\else
   \ifnum\dispkind=\tw@
    \edef\@tempa{\noexpand\FFRAME{\LaTeXparams}}%
    \@tempa{#2}{#3}{#5}{#6}{#7}{#8}%
    \fi
   \fi
  \fi
  \ifwasdraft\draft=1\else\draft=0\fi{}%
  \egroup
 }%
\def\TEXUX#1{"texux"}
\long\def\QQQ#1#2{%
     \long\expandafter\def\csname#1\endcsname{#2}}%
\long\def\QQA#1#2{}%
\def\QTR#1#2{{\csname#1\endcsname #2}}
\def\EXPAND#1[#2]#3{}%
\def\NOEXPAND#1[#2]#3{}%
\def\LaTeXparent#1{}%
\def\ChildStyles#1{}%
\def\ChildDefaults#1{}%
\def\QTagDef#1#2#3{}%
\def\QQfnmark#1{\footnotemark}
\def\makeatletter\input gnuindex.sty\makeatother\makeindex{\makeatletter\input gnuindex.sty\makeatother\makeindex}%
\def\initial#1{\bigbreak{\raggedright\large\bf #1}\kern 2\p@\penalty3000}}%
 \def\abstract{%
  \if@twocolumn
   \section*{Abstract (Not appropriate in this style!)}%
   \else \small 
   \begin{center}{\bf Abstract\vspace{-.5em}\vspace{\z@}}\end{center}%
   \quotation 
   \fi
  }%
   \def\registered{\relax\ifmmode{}\r@gistered
                    \else$\m@th\r@gistered$\fi}%
 \def\r@gistered{^{\ooalign
  {\hfil\raise.07ex\hbox{$\scriptstyle\rm\text{R}$}\hfil\crcr
  \mathhexbox20D}}}}{}%
\newdimen\theight
\def\Column{%
 \vadjust{\setbox\z@=\hbox{\scriptsize\quad\quad tcol}%
  \theight=\ht\z@\advance\theight by \dp\z@\advance\theight by \lineskip
  \kern -\theight \vbox to \theight{%
   \rightline{\rlap{\box\z@}}%
   \vss
   }%
  }%
 }%
\def\qed{%
 \ifhmode\unskip\nobreak\fi\ifmmode\ifinner\else\hskip5\p@\fi\fi
 \hbox{\hskip5\p@\vrule width4\p@ height6\p@ depth1.5\p@\hskip\p@}%
 }%
\def\miss{\hbox{\vrule height2\p@ width 2\p@ depth\z@}}%
\def\tcol#1{{\baselineskip=6\p@ \vcenter{#1}} \Column}  %
\def\newfmtname{LaTeX2e}
\def\chkcompat{%
   \if@compatibility
   \else
     \usepackage{latexsym}
   \fi
}
  \DeclareOldFontCommand{\rm}{\normalfont\rmfamily}{\mathrm}
  \DeclareOldFontCommand{\sf}{\normalfont\sffamily}{\mathsf}
  \DeclareOldFontCommand{\tt}{\normalfont\ttfamily}{\mathtt}
  \DeclareOldFontCommand{\bf}{\normalfont\bfseries}{\mathbf}
  \DeclareOldFontCommand{\it}{\normalfont\itshape}{\mathit}
  \DeclareOldFontCommand{\sl}{\normalfont\slshape}{\@nomath\sl}
  \DeclareOldFontCommand{\sc}{\normalfont\scshape}{\@nomath\sc}
\def\alpha{\Greekmath 010B }%
\def\beta{\Greekmath 010C }%
\def\gamma{\Greekmath 010D }%
\def\delta{\Greekmath 010E }%
\def\epsilon{\Greekmath 010F }%
\def\zeta{\Greekmath 0110 }%
\def\eta{\Greekmath 0111 }%
\def\theta{\Greekmath 0112 }%
\def\iota{\Greekmath 0113 }%
\def\kappa{\Greekmath 0114 }%
\def\lambda{\Greekmath 0115 }%
\def\mu{\Greekmath 0116 }%
\def\nu{\Greekmath 0117 }%
\def\xi{\Greekmath 0118 }%
\def\pi{\Greekmath 0119 }%
\def\rho{\Greekmath 011A }%
\def\sigma{\Greekmath 011B }%
\def\tau{\Greekmath 011C }%
\def\upsilon{\Greekmath 011D }%
\def\phi{\Greekmath 011E }%
\def\chi{\Greekmath 011F }%
\def\psi{\Greekmath 0120 }%
\def\omega{\Greekmath 0121 }%
\def\varepsilon{\Greekmath 0122 }%
\def\vartheta{\Greekmath 0123 }%
\def\varpi{\Greekmath 0124 }%
\def\varrho{\Greekmath 0125 }%
\def\varsigma{\Greekmath 0126 }%
\def\varphi{\Greekmath 0127 }%
\def\nabla{\Greekmath 0272 }
\def\FindBoldGroup{%
   {\setbox0=\hbox{$\mathbf{x\global\edef\theboldgroup{\the\mathgroup}}$}}%
}
\def\Greekmath#1#2#3#4{%
    \if@compatibility
        \ifnum\mathgroup=\symbold
           \mathchoice{\mbox{\boldmath$\displaystyle\mathchar"#1#2#3#4$}}%
                      {\mbox{\boldmath$\textstyle\mathchar"#1#2#3#4$}}%
                      {\mbox{\boldmath$\scriptstyle\mathchar"#1#2#3#4$}}%
                      {\mbox{\boldmath$\scriptscriptstyle\mathchar"#1#2#3#4$}}%
        \else
           \mathchar"#1#2#3#4%
        \fi 
    \else 
        \FindBoldGroup
        \ifnum\mathgroup=\theboldgroup 
           \mathchoice{\mbox{\boldmath$\displaystyle\mathchar"#1#2#3#4$}}%
                      {\mbox{\boldmath$\textstyle\mathchar"#1#2#3#4$}}%
                      {\mbox{\boldmath$\scriptstyle\mathchar"#1#2#3#4$}}%
                      {\mbox{\boldmath$\scriptscriptstyle\mathchar"#1#2#3#4$}}%
        \else
           \mathchar"#1#2#3#4%
        \fi     	    
	  \fi}
\newif\ifGreekBold  \GreekBoldfalse
\let\SAVEPBF=\pbf
\def\pbf{\GreekBoldtrue\SAVEPBF}%
  \newcounter{equationnumber}  
  \def\mathletters{%
     \addtocounter{equation}{1}
     \edef\@currentlabel{\theequation}%
     \setcounter{equationnumber}{\c@equation}
     \setcounter{equation}{0}%
     \edef\theequation{\@currentlabel\noexpand\alph{equation}}%
  }
    \def\BibTeX{{\rm B\kern-.05em{\sc i\kern-.025em b}\kern-.08em
                 T\kern-.1667em\lower.7ex\hbox{E}\kern-.125emX}}}{}%
\def\AmS{{\protect\usefont{OMS}{cmsy}{m}{n}%
                A\kern-.1667em\lower.5ex\hbox{M}\kern-.125emS}}}{}%
\let\DOTSI\relax
\def\RIfM@{\relax\ifmmode}%
\def\FN@{\futurelet\next}%
\def\iint{\DOTSI\intno@\tw@\FN@\ints@}%
\def\iiint{\DOTSI\intno@\thr@@\FN@\ints@}%
\def\iiiint{\DOTSI\intno@4 \FN@\ints@}%
\def\idotsint{\DOTSI\intno@\z@\FN@\ints@}%
\def\ints@{\findlimits@\ints@@}%
\newif\iflimtoken@
\newif\iflimits@
\def\findlimits@{\limtoken@true\ifx\next\limits\limits@true
 \else\ifx\next\nolimits\limits@false\else
 \limtoken@false\ifx\ilimits@\nolimits\limits@false\else
 \ifinner\limits@false\else\limits@true\fi\fi\fi\fi}%
\def\multint@{\int\ifnum\intno@=\z@\intdots@                          
 \else\intkern@\fi                                                    
 \ifnum\intno@>\tw@\int\intkern@\fi                                   
 \ifnum\intno@>\thr@@\int\intkern@\fi                                 
 \int}
\def\multintlimits@{\intop\ifnum\intno@=\z@\intdots@\else\intkern@\fi
 \ifnum\intno@>\tw@\intop\intkern@\fi
 \ifnum\intno@>\thr@@\intop\intkern@\fi\intop}%
\def\intic@{%
    \mathchoice{\hskip.5em}{\hskip.4em}{\hskip.4em}{\hskip.4em}}%
\def\negintic@{\mathchoice
 {\hskip-.5em}{\hskip-.4em}{\hskip-.4em}{\hskip-.4em}}%
\def\ints@@{\iflimtoken@                                              
 \def\ints@@@{\iflimits@\negintic@
   \mathop{\intic@\multintlimits@}\limits                             
  \else\multint@\nolimits\fi                                          
  \eat@}
 \else                                                                
 \def\ints@@@{\iflimits@\negintic@
  \mathop{\intic@\multintlimits@}\limits\else
  \multint@\nolimits\fi}\fi\ints@@@}%
\def\intkern@{\mathchoice{\!\!\!}{\!\!}{\!\!}{\!\!}}%
\def\plaincdots@{\mathinner{\cdotp\cdotp\cdotp}}%
\def\intdots@{\mathchoice{\plaincdots@}%
 {{\cdotp}\mkern1.5mu{\cdotp}\mkern1.5mu{\cdotp}}%
 {{\cdotp}\mkern1mu{\cdotp}\mkern1mu{\cdotp}}%
 {{\cdotp}\mkern1mu{\cdotp}\mkern1mu{\cdotp}}}%
\def\RIfM@{\relax\protect\ifmmode}
\def\text{\RIfM@\expandafter\text@\else\expandafter\mbox\fi}
\let\nfss@text\text
\def\text@#1{\mathchoice
   {\textdef@\displaystyle\f@size{#1}}%
   {\textdef@\textstyle\tf@size{\firstchoice@false #1}}%
   {\textdef@\textstyle\sf@size{\firstchoice@false #1}}%
   {\textdef@\textstyle \ssf@size{\firstchoice@false #1}}%
   \glb@settings}
\def\textdef@#1#2#3{\hbox{{%
                    \everymath{#1}%
                    \let\f@size#2\selectfont
                    #3}}}
\newif\iffirstchoice@
\def\Let@{\relax\iffalse{\fi\let\\=\cr\iffalse}\fi}%
\def\vspace@{\def\vspace##1{\crcr\noalign{\vskip##1\relax}}}%
\def\multilimits@{\bgroup\vspace@\Let@
 \baselineskip\fontdimen10 \scriptfont\tw@
 \advance\baselineskip\fontdimen12 \scriptfont\tw@
 \lineskip\thr@@\fontdimen8 \scriptfont\thr@@
 \lineskiplimit\lineskip
 \vbox\bgroup\ialign\bgroup\hfil$\m@th\scriptstyle{##}$\hfil\crcr}%
\def\Sb{_\multilimits@}%
\def\endSb{\crcr\egroup\egroup\egroup}%
\def\Sp{^\multilimits@}%
\newdimen\ex@
\def\rightarrowfill@#1{$#1\m@th\mathord-\mkern-6mu\cleaders
 \hbox{$#1\mkern-2mu\mathord-\mkern-2mu$}\hfill
 \mkern-6mu\mathord\rightarrow$}%
\def\leftarrowfill@#1{$#1\m@th\mathord\leftarrow\mkern-6mu\cleaders
 \hbox{$#1\mkern-2mu\mathord-\mkern-2mu$}\hfill\mkern-6mu\mathord-$}%
\def\leftrightarrowfill@#1{$#1\m@th\mathord\leftarrow
\mkern-6mu\cleaders
 \hbox{$#1\mkern-2mu\mathord-\mkern-2mu$}\hfill
 \mkern-6mu\mathord\rightarrow$}%
\def\overrightarrow{\mathpalette\overrightarrow@}%
\def\overrightarrow@#1#2{\vbox{\ialign{##\crcr\rightarrowfill@#1\crcr
 \noalign{\kern-\ex@\nointerlineskip}$\m@th\hfil#1#2\hfil$\crcr}}}%
\def\overleftarrow{\mathpalette\overleftarrow@}%
\def\overleftarrow@#1#2{\vbox{\ialign{##\crcr\leftarrowfill@#1\crcr
 \noalign{\kern-\ex@\nointerlineskip}$\m@th\hfil#1#2\hfil$\crcr}}}%
\def\overleftrightarrow{\mathpalette\overleftrightarrow@}%
\def\overleftrightarrow@#1#2{\vbox{\ialign{##\crcr
   \leftrightarrowfill@#1\crcr
 \noalign{\kern-\ex@\nointerlineskip}$\m@th\hfil#1#2\hfil$\crcr}}}%
\def\underrightarrow{\mathpalette\underrightarrow@}%
\def\underrightarrow@#1#2{\vtop{\ialign{##\crcr$\m@th\hfil#1#2\hfil
  $\crcr\noalign{\nointerlineskip}\rightarrowfill@#1\crcr}}}%
\def\underleftarrow{\mathpalette\underleftarrow@}%
\def\underleftarrow@#1#2{\vtop{\ialign{##\crcr$\m@th\hfil#1#2\hfil
  $\crcr\noalign{\nointerlineskip}\leftarrowfill@#1\crcr}}}%
\def\underleftrightarrow{\mathpalette\underleftrightarrow@}%
\def\underleftrightarrow@#1#2{\vtop{\ialign{##\crcr$\m@th
  \hfil#1#2\hfil$\crcr
 \noalign{\nointerlineskip}\leftrightarrowfill@#1\crcr}}}%
\def\qopnamewl@#1{\mathop{\operator@font#1}\nlimits@}
\let\nlimits@\displaylimits
\def\setboxz@h{\setbox\z@\hbox}
\def\varlim@#1#2{\mathop{\vtop{\ialign{##\crcr
 \hfil$#1\m@th\operator@font lim$\hfil\crcr
 \noalign{\nointerlineskip}#2#1\crcr
 \noalign{\nointerlineskip\kern-\ex@}\crcr}}}}
 \def\rightarrowfill@#1{\m@th\setboxz@h{$#1-$}\ht\z@\z@
  $#1\copy\z@\mkern-6mu\cleaders
  \hbox{$#1\mkern-2mu\box\z@\mkern-2mu$}\hfill
  \mkern-6mu\mathord\rightarrow$}
\def\leftarrowfill@#1{\m@th\setboxz@h{$#1-$}\ht\z@\z@
  $#1\mathord\leftarrow\mkern-6mu\cleaders
  \hbox{$#1\mkern-2mu\copy\z@\mkern-2mu$}\hfill
  \mkern-6mu\box\z@$}
\def\projlim{\qopnamewl@{proj\,lim}}
\def\injlim{\qopnamewl@{inj\,lim}}
\def\varinjlim{\mathpalette\varlim@\rightarrowfill@}
\def\varprojlim{\mathpalette\varlim@\leftarrowfill@}
\def\varliminf{\mathpalette\varliminf@{}}
\def\varliminf@#1{\mathop{\underline{\vrule\@depth.2\ex@\@width\z@
   \hbox{$#1\m@th\operator@font lim$}}}}
\def\varlimsup{\mathpalette\varlimsup@{}}
\def\varlimsup@#1{\mathop{\overline
  {\hbox{$#1\m@th\operator@font lim$}}}}
\def\align{\@verbatim \frenchspacing\@vobeyspaces \@alignverbatim
You are using the "align" environment in a style in which it is not defined.}
\let\csname endalign*\endcsname =\endtrivlist
\def\alignat{\@verbatim \frenchspacing\@vobeyspaces \@alignatverbatim
You are using the "alignat" environment in a style in which it is not defined.}
\let\csname endalignat*\endcsname =\endtrivlist
\def\xalignat{\@verbatim \frenchspacing\@vobeyspaces \@xalignatverbatim
You are using the "xalignat" environment in a style in which it is not defined.}
\let\csname endxalignat*\endcsname =\endtrivlist
\def\gather{\@verbatim \frenchspacing\@vobeyspaces \@gatherverbatim
You are using the "gather" environment in a style in which it is not defined.}
\let\csname endgather*\endcsname =\endtrivlist
\def\multiline{\@verbatim \frenchspacing\@vobeyspaces \@multilineverbatim
You are using the "multiline" environment in a style in which it is not defined.}
\let\csname endmultiline*\endcsname =\endtrivlist
\def\arrax{\@verbatim \frenchspacing\@vobeyspaces \@arraxverbatim
You are using a type of "array" construct that is only allowed in AmS-LaTeX.}
\def\tabulax{\@verbatim \frenchspacing\@vobeyspaces \@tabulaxverbatim
You are using a type of "tabular" construct that is only allowed in AmS-LaTeX.}
\let\csname endarrax*\endcsname =\endtrivlist
\let\csname endtabulax*\endcsname =\endtrivlist
\def\@@eqncr{\let\@tempa\relax
    \ifcase\@eqcnt \def\@tempa{& & &}\or \def\@tempa{& &}%
      \else \def\@tempa{&}\fi
     \@tempa
     \if@eqnsw
        \iftag@
           \@taggnum
        \else
           \@eqnnum\stepcounter{equation}%
        \fi
     \fi
     \global\tag@false
     \global\@eqnswtrue
     \global\@eqcnt\z@\cr}
 \def\endequation{%
     \ifmmode\ifinner 
      \iftag@
        \addtocounter{equation}{-1} 
        $\hfil
           \displaywidth\linewidth\@taggnum\egroup \endtrivlist
        \global\tag@false
        \global\@ignoretrue   
      \else
        $\hfil
           \displaywidth\linewidth\@eqnnum\egroup \endtrivlist
        \global\tag@false
        \global\@ignoretrue 
      \fi
     \else   
      \iftag@
        \addtocounter{equation}{-1} 
        \eqno \hbox{\@taggnum}
        \global\tag@false%
        $$\global\@ignoretrue
      \else
        \eqno \hbox{\@eqnnum}
        $$\global\@ignoretrue
      \fi
     \fi\fi
 } 
 \newif\iftag@ \tag@false
 \def\tag{\@ifnextchar*{\@tagstar}{\@tag}}
 \def\@tag#1{%
     \global\tag@true
     \global\def\@taggnum{(#1)}}
 \def\@tagstar*#1{%
     \global\tag@true
     \global\def\@taggnum{#1}%
}
\newcommand{\R}{{\mathbb{R}}}
\newcommand{\C}{{\mathbb{C}}}
\newcommand{\N}{{\mathbb{N}}}
\newcommand{\e}{\mathsf{e}}
\DeclareMathOperator{\pow}{\mathrm{pow}}
\DeclareMathOperator{\add}{\mathrm{add}}
\begin{document}
\title[Cauchy Pairs]{Cauchy Pairs\\
}
\author{Martin Himmel}
\curraddr{Technical University Mountain Academy Freiberg,
Tschaikowskistraße 67 
 09596 Freiberg,
 Germany}
\email{martin.himmel@gmail.com}

\begin{abstract}
	The notion of pairable functions is introduced and some of its properties are developed.
	In this connection the famous Euler identity is interpreted as a property of certain pairable functions and finite cyclic groups.
\end{abstract}

\maketitle

\QTP{Body Math}
$\bigskip $\footnotetext{\textit{2010 Mathematics Subject Classification. }%
Primary: 33B15, 26B25, 39B22.
\par
\textit{Keywords and phrases:}  addivity, commutativity, functional equation.}

\section{Introduction}
In this note we start with a prelude on Euler´s reflection formula linking the Gamma function to trigonometric functions.
This allows us to reformulate Euler´s identity in terms of the Gamma function evaluated at certain affine functions summing to one.
We continue with a generalization of the classical Gamma function based on the original version due to Euler.
But the main topic of this paper is the notion of pairability of functions,
which in some sense is the the idea to translate or scale a given function under the condition that a certain functional equation is satisfied.

\subsection{Reflection formula for cosine  function and a reformulation of Euler's identity }
The well-known reflection formula
\begin{equation*}
	\Gamma{\left(z\right)} \Gamma{\left(1-z\right)}=\frac{\pi}{\sin\left(\pi z\right)}, \qquad z \in \C \setminus -\N_0 
\end{equation*}
linking the Euler Gamma function and the sine function has a beautiful analogon for the cosine function.
Substituting here $z$ by $\frac{1}{2}-z$, we get
\begin{equation*}
	\Gamma{\left(\frac{1}{2}-z\right)} \Gamma{\left(\frac{1}{2}+z\right)}=\frac{\pi}{\cos\left(\pi z\right)}.
\end{equation*}
Rearranging these equations for sine and cosine, respectively, gives us
\begin{equation*}
\sin{z}	=\frac{\pi}{\Gamma{\left(\frac{z}{\pi}\right)} \Gamma{\left(1-\frac{z}{\pi}\right) }},
\end{equation*}
and
\begin{equation*}
	\cos{z} = \frac{\pi}{\Gamma{\left(\frac{1}{2}-\frac{z}{\pi}\right)} \Gamma{\left(\frac{1}{2}+\frac{z}{\pi}\right)}},
\end{equation*}
and thus
\begin{equation*}
	\tan{z} = \frac{\Gamma{\left(\frac{z}{\pi}\right)} \Gamma{\left(1-\frac{z}{\pi}\right) }}{\Gamma{\left(\frac{1}{2}-\frac{z}{\pi}\right)} \Gamma{\left(\frac{1}{2}+\frac{z}{\pi}\right)}}
\end{equation*}

Hence, we may express the Euler identity $e^{iz}=\cos{z}+i \sin{z}$ without explicitly using trigonometric functions by 
\begin{equation*}
	e^{iz} = \frac{\pi}{\Gamma{\left(\frac{1}{2}-\frac{z}{\pi}\right)} \Gamma{\left(\frac{1}{2}+\frac{z}{\pi}\right)}} 
	+ i \frac{\pi}{	\Gamma{\left(\frac{z}{\pi}\right)} \Gamma{\left(1-\frac{z}{\pi}\right) }}.
\end{equation*}
Substituting now $z$ by $-z$ gives us
\begin{equation*}
	e^{-iz} = \frac{\pi}{\Gamma{\left(\frac{1}{2}+\frac{z}{\pi}\right)} \Gamma{\left(\frac{1}{2}-\frac{z}{\pi}\right)}} 
	+ i \frac{\pi}{	\Gamma{\left(\frac{-z}{\pi}\right)} \Gamma{\left(1+\frac{z}{\pi}\right) }}.
\end{equation*}
Multiplying these two equations with each other ($e^{iz}  e^{-iz} =1=\ldots $), we obtain
\begin{equation*}
	1= 
	\left( \frac{\pi}{\Gamma{\left(\frac{1}{2}-\frac{z}{\pi}\right)} \Gamma{\left(\frac{1}{2}+\frac{z}{\pi}\right)}} 
	+ i \frac{\pi}{	\Gamma{\left(\frac{z}{\pi}\right)} \Gamma{\left(1-\frac{z}{\pi}\right) }}\right)
	\left( \frac{\pi}{\Gamma{\left(\frac{1}{2}+\frac{z}{\pi}\right)} \Gamma{\left(\frac{1}{2}-\frac{z}{\pi}\right)}} 
	+ i \frac{\pi}{	\Gamma{\left(\frac{-z}{\pi}\right)} \Gamma{\left(1+\frac{z}{\pi}\right) }} \right)	.
\end{equation*}
Sorting now by real and immaginary part gives us the following two equations
\begin{equation*}
	1= 
	 \frac{\pi}{\Gamma{\left(\frac{1}{2}-\frac{z}{\pi}\right)} \Gamma{\left(\frac{1}{2}+\frac{z}{\pi}\right)}} 
	 \cdot
	 \frac{\pi}{\Gamma{\left(\frac{1}{2}+\frac{z}{\pi}\right)} \Gamma{\left(\frac{1}{2}-\frac{z}{\pi}\right)}}
	- \frac{\pi}{	\Gamma{\left(\frac{z}{\pi}\right)} \Gamma{\left(1-\frac{z}{\pi}\right) }}
	  \cdot \frac{\pi}{	\Gamma{\left(\frac{-z}{\pi}\right)} \Gamma{\left(1+\frac{z}{\pi}\right) }}, 
\end{equation*}
and
\begin{equation*}
	0= \frac{\pi}{\Gamma{\left(\frac{1}{2}-\frac{z}{\pi}\right)} \Gamma{\left(\frac{1}{2}+\frac{z}{\pi}\right)}} 
	\cdot \frac{\pi}{	\Gamma{\left(\frac{-z}{\pi}\right)} \Gamma{\left(1+\frac{z}{\pi}\right) }} +
	\frac{\pi}{	\Gamma{\left(\frac{z}{\pi}\right)} \Gamma{\left(1-\frac{z}{\pi}\right) }} 
	\cdot
	\frac{\pi}{\Gamma{\left(\frac{1}{2}+\frac{z}{\pi}\right)} \Gamma{\left(\frac{1}{2}-\frac{z}{\pi}\right)}}
\end{equation*}
On the other hand, the trigonometric version of the well-known Pythagorean theorem gives us
\begin{equation*}
	\frac{1}{\left( \Gamma \left( \frac{z}{\pi}\right) \Gamma \left(1-\frac{z}{\pi}\right)\right)^2}+\frac{1}{\left(\Gamma \left(\frac{1}{2} -\frac{z}{\pi}\right) \Gamma\left(\frac{1}{2}+\frac{z}{\pi}\right) \right)^2}=\frac{1}{\pi^2}.
\end{equation*}

\section{Euler Gamma function}
The Euler Gamma function $\Gamma: (0, + \infty) \to (0, + \infty)$ is usually defined by
\begin{equation*}
	\Gamma(x)=\int_0^\infty {e^{-t} t^{x-1}\, dt}, \qquad x \in (0, +\infty).
\end{equation*} 
An equivalent formulation,
\begin{equation*}
	\Gamma(x)=\int_0^1 \left({-\log{t}}\right)^{x-1}\, dt, \qquad x \in (0, +\infty),
\end{equation*} 
shows us a connection to the classical logarithm function,  a regular solution to the Cauchy functional equation of logarithmic type.
This motivates us to introduce the following
\begin{definition}

Let $I \subset \R$ be an interval and $\varphi: I \to (0, +\infty)$ an integrable function.
We call
\begin{equation}
	\Gamma_{\varphi} (x)=\int_0^1 \left({\varphi{(t)}}\right)^{x-1}\, dt, \qquad x \in I,
	\label{eq:GammaPHI}
\end{equation} 
the Gamma function of generator $\varphi$.
\end{definition}
In this light, the Euler Gamma function equals the Gamma function of generator negative logarithm\footnote{For any $a>0$, the real logarithmic function $x \mapsto \log_a{x}$ is negative for $x\in (0, a)$  and positive for $x\in (a, +\infty)$. Thus, the Gamma integral is well-defined.},
i.e. $\Gamma_{-\log}=\Gamma$.
\subsection{Gamma function of regular Cauchy generator}
What kind of functions are obtained when
the generator $\varphi$  in \eqref{eq:GammaPHI} is a regular solution to one of the other three Cauchy equations
is discussed now briefly.

When $\varphi$ is a regular solution the Cauchy function equation of exponential type, i.e., if there is $a>0$ such that $\varphi(t)=a^t$ for all $t \in \R$,
we obtain 
\begin{equation*}
	\Gamma_{\exp} (x)=\int_0^1 \left({a^{t}}\right)^{x-1}\, dt, \qquad x\in \R,
\end{equation*} 
as the Gamma function of exponential generator.
Since the integration here can be performed explicity,
we have, for all $x \in \R$, $x \neq 1$,
\begin{eqnarray*}
	\Gamma_{\exp} (x)&=& \left. \frac{{\left(a^t  \right)}^{x-1}}{(x-1)\log{a}} \right|_0^1\\
	&=&
	\frac{a^{x-1}-1}{(x-1) \log{a}},
\end{eqnarray*} 
and $\Gamma_{\exp} (1)=1$.

If $\varphi$ is a regular solution the Cauchy functional equation of additive type, which means that there is $c \in \R$ such that $\varphi(t)=ct$ for all $t \in \R$,
we obtain, for all $ x\in \R$, $x \neq 0$,
\begin{eqnarray*}
	\Gamma_{\add} (x) &=& \int_0^1 \left({c{t}}\right)^{x-1}\, dt, \\
	&=& 
	\left.  \frac{(ct)^x}{cx} \right|_0^1\\
	&=&
	\frac{c^{x-1}}{x},
\end{eqnarray*} 
as the Gamma function of additive generator.
For obvious reasons $c$ must be positive here.

If $\varphi$ is a regular solution to the Cauchy functional equation of logarithmic type, i.e.,
 there is $c \in \R$ such that $\varphi(t)=c\log{t}$ for all $t \in (0, +\infty)$,
we obtain, for all $ x\in \R \setminus (- \N)$,
\begin{eqnarray*}
	\Gamma_{\log} (x) &=& \int_0^1 \left({c\log{t}}\right)^{x-1}\, dt, \\
	&=& 
	(-c)^{x-1} \Gamma(x),
\end{eqnarray*} 
as the Gamma function of logarithmic generator, and $c$ is restricted to be negative. 

If $\varphi$ is a regular solution to the Cauchy functional equation of multiplicative type, 
which means that there is $p \in \R$ such that $\varphi(t)={t}^p$ for all $t \in (0, +\infty)$,
we obtain, for all $ x\in \R$, 
\begin{eqnarray*}
	\Gamma_{\pow} (x) &=& \int_0^1 \left({{t}^p}\right)^{x-1}\, dt, \\
	&=& 
	\left. \frac{t^{p (x-1)+1}}{p(x-1)+1} \right|_0^1\\
	&=&
	\frac{1}{p(x-1)+1},
\end{eqnarray*} 
as the Gamma function of multiplicative generator, where $x \neq 1 -\frac{1}{p}$ for $p \neq 0$ and $\Gamma_{\pow} \equiv 1$ for $p=0$. 
\section{Pairability}
In this paper we mainly deal with the functional equation
 \begin{equation}
 f(x+y)=f(x) g(y) + f(y) g(x) 
 \tag{S}
\label{eq:SinFG}
 \end{equation}
coming from the sine addition formula.
It is known that the pair of real trigonometric functions $(f,g)=(\sin, \cos)$ satisfy equation \eqref{eq:SinFG} on $\R$.
Moreover, it holds $\cos{x}=\sin{\left(x+\frac{\pi}{2}\right)}$ for all $x\in \R$.
This motivates us to deal with the question whether, for a given function $f$, there is some (possibly non-constant) period $T$  such that
$g(x)=f(x+T)$ for all suitable $x$, and the pair $(f,g)$ satisfies \eqref{eq:SinFG}. 
To make things more precise, we start with the following

\begin{definition}
	Let $I \subset \R$ be an interval closed under addition and $f, g:I \to \R$ be functions.
	The function $f$ is called \text{pairable} with $g$ with respect to the functional equation \eqref{eq:SinFG}, 
	if there is some  $T: I^2 \to \R$, called period function, such that $g(x)=f(x+T)$ for all $x\in I$,
	and the pair $(f(\cdot),f(\cdot+T))$ satisfies \eqref{eq:SinFG}. 
	Moreover, if $f$ satisfies in addition some Cauchy functional equation,
	then $(f,g)$ is called 
	\textit{Cauchy pair} with respect to this functional equation.
	If $(f,g)$ is a Cauchy pair and $g$ satisfies the same Cauchy functional euqation as $f$, 
	then $(f,g)$ is called \textit{true Cauchy pair}.
\end{definition}
The careful reader will have noticed that in the latter definition to some degree the notion of domain is suppressed.
So, when introducing Cauchy pairs,  it is not mentioned on what set the function $f$ satisfies some given Cauchy functional equation.
The reason for this is twofold and we will clarify this later on ad hoc when dealing with concrete examples. 

Obviously, the notion of pairability makes only sense for functional equations with at least two unknown functions,
as also the one 
 \begin{equation}
	g(x+y)=g(x)g(y)-f(x) f(y)  
	\tag{C}
	\label{eq:CosFG}
\end{equation}
coming from the cosine addition formula.


\section{Preliminaries}
In this paper, functions often occure related to translation or scaling behavior.
It seems natural, especially during the calculations, to suppress sometimes the dependency on the variables.
For instance, the letter $T$ denoting the period function may stand for a number or function of one or two variables,
and to make this more clear, we sometimes write $T=T(x,y)$ or $T=T(x)$ as common in engineering literature or when dealing with parametrized curves.
 With the letter $\R^*$ we denote the set of real numbers without zero.
 
\subsection{Cauchy pairs} 
In this section we prove some properties on Cauchy pairs
and start with the
\subsection{Additive Cauchy equation}
Let $f: \R \to \R$ be a regular solution to the additive Cauchy functional equation
 \begin{equation}
	f(x+y)=f(x) + f(y),
	\tag{A}
	\label{eq:AddiCauchy}
\end{equation}
for all $x,y \in \R$. 
Thus, there is $c \in \R$ such that $f(x)=cx$ for all $x\in \R$.
All Cauchy pairs with respect to the sine addition formula \eqref{eq:SinFG}
are determined in the following
\begin{theorem}
	Let $f:\R \to \R$ be a regular solution to the additive Cauchy functional equation \eqref{eq:AddiCauchy} on $\R$,
	namely there is $c\in \R$  such that $f(x)=cx$ for all $x \in \R$,
	and $g: \R\to\R$ be defined by $g(x)=f(x+T)$ for all $x \in \R$.
	Then $(f,g)$ is a Cauchy pair for the sine addition functional equation \eqref{eq:SinFG} if, and only if,
\begin{equation*}
	T_c(x,y)=\frac{1}{c}- \frac{2xy}{x+y},	
\end{equation*}
for all $x,y \in \R$.
\end{theorem}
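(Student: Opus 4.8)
The plan is a direct substitution followed by elementary algebra. Since the theorem already fixes $f(x)=cx$, reading \eqref{eq:SinFG} with $g(\cdot)=f(\cdot+T)$ and $T=T(x,y)$ as prescribed by the definition of a Cauchy pair turns the equation into
\begin{equation*}
	c(x+y)=cx\cdot c(y+T)+cy\cdot c(x+T)=c^{2}\bigl(2xy+T(x+y)\bigr).
\end{equation*}
So the whole content of the theorem is to solve this single identity for $T$.

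First I would dispose of the degenerate case $c=0$: then $f\equiv g\equiv 0$ and \eqref{eq:SinFG} holds trivially for every $T$, while no formula involving $\tfrac1c$ is meaningful; hence the statement is to be understood for $c\neq0$, which is also the only case in which the asserted $T_c$ is defined. Assuming $c\neq0$, dividing the displayed identity by $c$ shows it is equivalent to
\begin{equation*}
	x+y=c\bigl(2xy+T(x+y)\bigr),\qquad\text{that is,}\qquad T(x+y)=\frac{x+y}{c}-2xy .
\end{equation*}
For $x+y\neq0$ this is in turn equivalent to $T=\dfrac{1}{c}-\dfrac{2xy}{x+y}=T_c(x,y)$, and this equivalence delivers both implications at once: if $T=T_c$, substituting back retraces the computation and yields \eqref{eq:SinFG}; conversely \eqref{eq:SinFG} forces $T=T_c$.

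The only point requiring care is the domain, and I expect it to be the real (if minor) obstacle rather than the algebra. On the antidiagonal $x+y=0$ with $x\neq0$ the same computation gives $0=-2c^{2}x^{2}\neq0$, so \eqref{eq:SinFG} cannot hold there for any choice of $T$; accordingly the identity $T=T_c$ — and equation \eqref{eq:SinFG} itself in the present additive setting — must be read on $\{(x,y):x+y\neq0\}$, in line with the remark that the domain is deliberately left implicit in the definition of a Cauchy pair. I would add a sentence making this restriction explicit (and noting that $T_c$ blows up as $x+y\to0$, so it does not extend across that line). With that caveat in place the proof is complete; the remaining steps are routine and present no difficulty.
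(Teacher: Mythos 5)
Your proof is correct and follows essentially the same route as the paper's: substitute $f(x)=cx$ and $g(x)=f(x+T)$ into \eqref{eq:SinFG}, reduce to $x+y=c\bigl(2xy+T(x+y)\bigr)$ after discarding $c=0$, and solve for $T$ off the line $x+y=0$. Your explicit observation that the equation fails identically on the antidiagonal (except at the origin) and that the statement must be read on $\{(x,y):x+y\neq 0\}$ is the same caveat the paper makes, stated a bit more cleanly.
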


\begin{proof}
Take a function $f: \R \to \R$ with $f(x)=cx$ for some $c\in \R$
 and define $g: \R \to \R$ by $g(x):=f(x+T)$ for some $T: \R^2 \to \R$. 
Assume that the pair $(f,g)$ satisfies \eqref{eq:SinFG}.
Hence, for all $x,y \in \R$,
\begin{eqnarray*}
	f(x+y) &=& c(x+y)\\
	& \stackrel{!}{=}& f(x) g(y) + f(y) g(x)\\
	&=&  f(x) f(y+T) + f(y) f(x+T)\\
	&=& cx c(y+T) + cy c(x+T)\\
	&=& c^2 \left( 2xy + T (x+y) \right).
\end{eqnarray*}
For $c=0$ here clearly equality holds. 
Therefore, we assume $c \neq 0$ to devide both sides by it, yielding
\begin{equation}
	x+y = c \left( 2xy + T (x+y) \right).
	\label{eq:tag1}
\end{equation}
Whence,
\begin{equation*}
	\frac{x+y}{c} -2xy =  T (x+y).
\end{equation*}
If $y = -x$, we have $-2xy=0$, which holds iff $x=0$ or $y=0$, so not for all $x,y \in \R$.
Otherwse, i.e., if $x+y \neq 0$,
the period function equals 
\begin{equation*}
	 T=T_c (x,y)= \frac{1}{c} -\frac{2xy}{x+y}, 
\end{equation*}
a non-zero constant minus the harmonic mean, as was claimed. 
The conversion is easy to verify.
\end{proof}
Interestingly, $T_c (x,x)=\frac{1}{c}-x$ and $T_1 (x,x)=1-x$, a term which appears naturally in the reflection formula of the Euler Gamma function.
\\
Note also that from equation \eqref{eq:tag1} we get
\begin{equation*}
	y \left(1-c(2x-T) \right) =  x\left( cT  -1 \right),
\end{equation*}
which can be solved for $y$ only if the bracket on the left hand side does not vanish,
thus if $x \neq \frac{1}{2} \left( \frac{1}{c}+T\right)$.
(Since $T$ depends on both $x$ and $y$, this excludes more than one number!)
Assume that $T$ is independent of, let's say, $x$ .
Then, since $T$ is differentiable, its partial derivative with respect to this variable vanishes
\begin{equation*}
	\frac{\partial T}{\partial{x}} (x,y)=\frac{-2y^2}{(x+y)^2}=0,
\end{equation*}
hence $y=0$ (observe that $T(x,0)=\frac{1}{c}$). 
Since $T$ is symmetric in both variables, $T$ is indipendent of $y$ only for $x=0$.
Thus, $T$ is constant only at the origin $x=y=0$.
En passant we thus proved 
that for regular solutions to the addtive Cauchy equation there is no Cauchy pair with respect to \eqref{eq:SinFG} having constant 
period. 
Since translations of a line through the origin is not additive any more,
there are no true Cauchy pairs of additive type with respect to \eqref{eq:SinFG}.
Note also that the period function determined here vanishes exactly on the level sets of the harmonic mean $H$ and $\lim_{c \to \infty} T_c=-H$. 

What, on the other hand, happens, if the roles of $f$ and $g$ are reversed meaning that $g$ is regular additive, the second function is defined by a certain translation of the first one, thus $f(x):=g(x+T)$ for some period function $T$ such that the sine functional equation holds true?
The answer is given in the following
\begin{remark}
Assume that $(g,f)$ is a dual Cauchy pair of additive type with respect to the sine functional equation,
i.e., $g(x)=cx$ for some $c\in \R$, $f(x):=g(x+T)$.
By \eqref{eq:SinFG}, we have, for all $x,y \in \R$,
\begin{eqnarray*}
f(x+y) &=& g(x+y+T)\\
&=& c	(x+y+T)\\
&=& f(x)g(y)+f(y)g(x)\\
&=& g(x+T)g(y)+g(y+T)g(x)\\
&=& c^2(xy+T (x+y)).
\end{eqnarray*}
Without loss of generality we can assume $c \neq 0$.
Thus, for all $x,y \in \R$,
\begin{equation*}
x+y+T=c(xy+T (x+y)),
\end{equation*}
  and hence
\begin{equation*}
	T=\frac{x+y-cxy}{c(x+y)-1}
\end{equation*}
unless $y=\frac{1}{c}-x$ where the period function is not defined.
It is natural to ask when the two period functions of Cauchy pair and dual Cauchy pair, respectively, are equal,
which in this case amounts to the algebraic equation
 \begin{equation*}
 	(x^2 y + y x^2) c (c-1)-c (x^2 +y^2 +4xy)+x+y=0.
 \end{equation*}
\end{remark}
Next all additive Cauchy pairs with respect to the cosine addition formula \eqref{eq:CosFG}
are determined.
\begin{theorem}
	Let $g:\R \to \R$ be a regular solution to the additive Cauchy functional equation \eqref{eq:AddiCauchy} on $\R$,
	namely there is $c\in \R$  such that $g(x)=cx$ for all $x \in \R$,
	and $f: \R\to\R$ be defined by $f(x)=g(x+T)$ for all $x \in \R$.
	Then $(g,f)$ is a Cauchy pair with respect to the cosine addition functional equation \eqref{eq:CosFG} if, and only if,
	\begin{equation*}
		T(x,y)=-\frac{x+y}{2} - \frac{1}{2} \sqrt{(x+y) \left[ (x+y)-\frac{4}{c}\right]},	
	\end{equation*}
	for all $x,y \in \R$.
\end{theorem}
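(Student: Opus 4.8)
The plan is to follow the same recipe as in the proof of the preceding (additive, sine) theorem: substitute the explicit forms $g(x)=cx$ and $f(x)=g(x+T)=c(x+T)$ into the cosine equation \eqref{eq:CosFG} and collapse it to an algebraic condition on the period function $T=T(x,y)$. Computing the two sides, the left-hand side is $g(x+y)=c(x+y)$, while $g(x)g(y)-f(x)f(y)=c^{2}xy-c^{2}(x+T)(y+T)=-c^{2}\bigl(T(x+y)+T^{2}\bigr)$. If $c=0$ both $g$ and $f$ vanish identically and \eqref{eq:CosFG} holds for every $T$, a degenerate case; so we may assume $c\neq0$ and divide through by $c$.

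After dividing by $c$, the requirement that $(g,f)$ satisfy \eqref{eq:CosFG} becomes $x+y=-c\bigl(T(x+y)+T^{2}\bigr)$, i.e.\ the quadratic $T^{2}+(x+y)\,T+\frac{x+y}{c}=0$ in the unknown $T$. Applying the quadratic formula and factoring the discriminant as $(x+y)^{2}-\frac{4(x+y)}{c}=(x+y)\bigl[(x+y)-\frac{4}{c}\bigr]$ yields $T=-\frac{x+y}{2}\pm\frac12\sqrt{(x+y)\bigl[(x+y)-\frac{4}{c}\bigr]}$, which is exactly the asserted formula on the branch with the minus sign (the plus branch gives a second admissible period, hence a second cosine-type Cauchy pair built on the same additive $g$). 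For the converse one runs the computation backwards: if $T$ solves $T^{2}+(x+y)T+\frac{x+y}{c}=0$ then $g(x)g(y)-f(x)f(y)=-c^{2}\bigl(T(x+y)+T^{2}\bigr)=c(x+y)=g(x+y)$, so \eqref{eq:CosFG} holds.

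No step here is difficult; the whole argument is one substitution followed by solving a quadratic, so there is no real obstacle, only two subtleties worth flagging. First, exactly as the sine-case period function was undefined on the line $x+y=0$, the phrase ``for all $x,y\in\R$'' in the statement must be read with the caveat that the square root is real only where $(x+y)\bigl[(x+y)-\frac{4}{c}\bigr]\ge0$ --- the region $x+y\le0$ or $x+y\ge\frac{4}{c}$ when $c>0$, and $x+y\ge0$ or $x+y\le\frac{4}{c}$ when $c<0$ --- so the period function is genuinely defined only there. Second, the discriminant vanishes precisely on the level sets $x+y=0$ and $x+y=\frac{4}{c}$, where the two branches merge, so off this exceptional set there are exactly two candidate periods; and, as in the remark following the additive sine theorem, neither produces a \emph{true} Cauchy pair, since the translate $x\mapsto c(x+T)$ of a line not through the origin is no longer additive.
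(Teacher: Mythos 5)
Your proposal is correct and follows essentially the same route as the paper: substitute $g(x)=cx$ and $f(x)=c(x+T)$ into \eqref{eq:CosFG}, reduce to the quadratic $T^{2}+(x+y)T+\frac{x+y}{c}=0$, and solve by the quadratic formula after disposing of the degenerate case $c=0$. Your added observations about the sign of the discriminant and the second branch are sensible refinements but do not change the argument.
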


\begin{proof}
	Take a function $g: \R \to \R$ with $g(x)=cx$ for some $c\in \R$
	and define $f: \R \to \R$ by $f(x):=g(x+T)$ for some $T: \R^2 \to \R$. 
	Assume that the pair $(g,f)$ satisfies \eqref{eq:CosFG}.
	Hence, for all $x,y \in \R$,
	\begin{eqnarray*}
		g(x+y) &=& c(x+y)\\
		& \stackrel{!}{=}& g(x) g(y) - f(x) f(y) \\
		&=&  g(x) g(y) - g(x+T) g(y+T)\\
		&=& cx cy - c (x+T) c(y+T)\\
		&=& -c^2 \left( T^2  + T (x+y) \right).
	\end{eqnarray*}
	For $c=0$ clearly equality holds. 
	Therefore, let us assume that $c \neq 0$. Hence, 
	\begin{equation*}
		x+y = -c \left( T^2 + T (x+y) \right),
	\end{equation*}
	and thus
	\begin{equation*}
		T^2 + (x+y) T +\frac{x+y}{c}=0,
	\end{equation*}
   a quadratic equation in $T$ with solution
   	\begin{eqnarray*}
		T(x,y) &=& -\frac{x+y}{2} \pm \sqrt{\left(\frac{x+y}{2} \right)^2-\frac{x+y}{c}}\\
		&=& -\frac{x+y}{2} \pm \frac{1}{2} \sqrt{(x+y) \left[ (x+y) -\frac{4}{c}\right]}\\
			&=& -\frac{x+y}{2} \pm \frac{1}{2} \sqrt{x \left(x-\frac{4}{c}\right) +2xy +y \left(y-\frac{4}{c}\right)}.
	\end{eqnarray*}
	The conversion is easy to verify.
	\end{proof}
Note that this period function vanishes iff $y=-x$, which obviously, unlike to the case before, is not the level set of a mean. 
Moreover, the term under the square root is reflexive only at the two fixed points $x=0$ and $x=\frac{8}{3}c$.

The special case where the sum of the variables is constant, say $x+y=d$, gives us the period
\begin{equation*}
	T_c(d):=T(x,d-x)=\frac{1}{2}\left(-d \pm \sqrt{d^2-4\frac{d}{c}}\right).
\end{equation*}
In the case $c=d$ we have $T(c):=T(x,c-x)=\frac{1}{2}\left(-c \pm \sqrt{c^2-4}\right)$.
When the period function $c \mapsto T(c)$ attains axtremas values, is of special interest.
Its derivative 
\begin{equation*}
	T^\prime (c)=-\frac{1}{2} \pm \frac{c}{\sqrt{c^2-4}}
\end{equation*}
vanishes exactly for $c^2=-\frac{4}{3}$. The corresponding value of the period function is
\begin{eqnarray*}
	T_E&:=&T \left(\pm \frac{2i}{\sqrt{3}}\right)\\
	&=&\mp \frac{1}{\sqrt{3}}i \pm \frac{2}{\sqrt{3}}i,
\end{eqnarray*}
so $T_E=\pm \frac{\sqrt{3}}{3}i$.

\subsection{Exponential Cauchy equation}
Let $f: \R \to (0, + \infty)$ be a regular solution to the exponential Cauchy functional equation
\begin{equation}
	f(x+y)=f(x)  f(y),
	\tag{E}
	\label{eq:ExpoCauchy}
\end{equation}
for all $x,y \in \R$. 
Thus, there is $a \in (0, + \infty)$ such that $f(x)=a^x$ for all $x\in \R$.
All Cauchy pairs with respect to the sine addition formula \eqref{eq:SinFG}
are determined in the following

\begin{theorem}
	Let $f:\R \to (0, +\infty)$ be a regular solution to the exponential Cauchy functional equation \eqref{eq:ExpoCauchy} on $\R$,
namely there is $a \in (0, +\infty)$  such that $f(x)=a^x$ for all $x \in \R$,
and $g: \R\to (0, +\infty)$ be defined by $g(x)=f(x+T)$ for all $x \in \R$.
Then $(f,g)$ is a Cauchy pair for the sine addition functional equation \eqref{eq:SinFG} if, and only if,
\begin{equation*}
	T(x,y)=-\log_a {2},	
\end{equation*}
for all $x,y \in \R$.
Moreover, there are no dual exponential Cauchy pairs of the form $(g,f)$ for the sine functional equation.
\end{theorem}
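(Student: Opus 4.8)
The plan is to proceed exactly as in the additive and cosine theorems above: substitute the explicit form of $f$ into \eqref{eq:SinFG} and solve for the period function. Write $f(x)=a^{x}$ and $g(x)=f(x+T)=a^{x+T}$ with $T=T(x,y)$ understood (as in the additive section) consistently in all its occurrences. Then the right-hand side of \eqref{eq:SinFG} becomes
\begin{equation*}
	f(x)g(y)+f(y)g(x)=a^{x}a^{y+T}+a^{y}a^{x+T}=2\,a^{x+y}a^{T},
\end{equation*}
while the left-hand side is $f(x+y)=a^{x+y}$. First I would dispose of the degenerate case $a=1$: there $f\equiv 1$, the right-hand side equals $2$, so \eqref{eq:SinFG} fails identically and no Cauchy pair exists; hence we may assume $a\neq 1$, so that $\log_a$ is meaningful. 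Since $a^{x+y}\neq 0$, dividing yields $2\,a^{T}=1$, i.e. $a^{T}=\tfrac{1}{2}$, and therefore necessarily $T(x,y)=-\log_a 2$ for all $x,y\in\R$. This proves the period function is forced to be the stated constant.

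For the converse I would simply check that the constant period $T=-\log_a 2$ works: then $g(x)=a^{x}a^{-\log_a 2}=a^{x}/2$, and $f(x)g(y)+f(y)g(x)=\tfrac{1}{2}a^{x+y}+\tfrac{1}{2}a^{x+y}=a^{x+y}=f(x+y)$, so the pair $(f,g)$ satisfies \eqref{eq:SinFG}; since $f$ is by hypothesis a regular solution of the exponential Cauchy equation \eqref{eq:ExpoCauchy}, $(f,g)$ is indeed a Cauchy pair. This closes the equivalence.

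For the non-existence of dual pairs I would reverse the roles: let $g(x)=a^{x}$ be the regular exponential solution and set $f(x)=g(x+T)$ with $T=T(x,y)$. Substituting into \eqref{eq:SinFG} gives
\begin{equation*}
	f(x+y)=g(x+y+T)=a^{x+y+T},\qquad f(x)g(y)+f(y)g(x)=a^{x+T}a^{y}+a^{y+T}a^{x}=2\,a^{x+y+T},
\end{equation*}
hence $a^{x+y+T}=2\,a^{x+y+T}$, which is impossible since $a^{x+y+T}>0$. So there is no admissible period function, and no dual exponential Cauchy pair of the form $(g,f)$ with respect to \eqref{eq:SinFG}.

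I do not expect a genuine obstacle here: the whole argument is a one-line substitution followed by cancellation of the never-vanishing factor $a^{x+y}$ (respectively $a^{x+y+T}$). The only points that require care are the same two as elsewhere in this section — reading the one-variable relation $g(x)=f(x+T)$ with $T=T(x,y)$ consistently, and separating out $a=1$ before invoking $\log_a$.
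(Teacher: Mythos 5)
Your proof is correct and follows essentially the same route as the paper: substitute $f(x)=a^x$, $g(x)=a^{x+T}$ into \eqref{eq:SinFG}, cancel the nonvanishing factor $a^{x+y}$ to force $2a^{T}=1$, and in the dual case derive the contradiction $a^{x+y+T}=2a^{x+y+T}$. Your explicit treatment of the degenerate base $a=1$ and the written-out converse check are small refinements the paper omits, but they do not change the argument.
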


\begin{proof}
Take a function $f: \R \to (0, +\infty)$ with $f(x)=a^x$ for some fixed $a \in (0, +\infty)$
	and define $g: \R \to \R$ by $g(x):=f(x+T)$ for some $T: \R^2 \to \R$. 
	Assume that the pair $(f,g)$ satisfies \eqref{eq:SinFG}.
	Hence, for all $x,y \in \R$,
	\begin{eqnarray*}
		f(x+y) &=& a^{x+y}\\
		& \stackrel{!}{=}& f(x) g(y) + f(y) g(x)\\
		&=&  f(x) f(y+T) + f(y) f(x+T)\\
		&=& a^x a^{y+T} + a^y a^{x+T}\\
		&=& 2 a^{x+y+T},
	\end{eqnarray*}
thus $2 a^T=1$ and consequently $T=-\log_a {2}$.
To prove the second part, assume that $g(x)=a^x$ is a regular exponential and $f$ is defined by $f(x)=g(x+T)$ for all $x \in \R$ with some $T: \R^2 \to \R$.
By \eqref{eq:SinFG}, this means, for all $x,y \in \R$,
	\begin{eqnarray*}
	f(x+y) &=& a^{x+y+T}\\
	& \stackrel{!}{=}& f(x) g(y) + f(y) g(x)\\
	&=&  g(x+T) g(y) + g(y+T) g(x)\\
	&=& a^{x+T} a^{y} + a^{y+T} a^{x}\\
	&=& 2 a^{x+y+T},
\end{eqnarray*}
whence, $a^T=0$, which is impossible since non-trivial exponentials have no zeros and $T$ is supposed to be real-valued.
\end{proof}
So we observed that there is exactly one constant period function for regular exponential Cauchy pairs,
but sadly no dual Cauchy pair of this type.

Next all regular exponential Cauchy pairs with respect to the cosine addition formula \eqref{eq:CosFG}
are dealt with.
Since  $g$ plays the role of the cosine function and $f$ the one of sine in \eqref{eq:CosFG}, 
we thought it would be natural here to ask for Cauchy pairs in the form $(g,f)$ insteadt of $(f,g)$ for the cosine euquation.
Interestingly, there does not exists any Cauchy pair of this form.
That is why we also deal with Cauchy pairs in the usual form $(f,g)$,
 which we call dual Cauchy pair with respect to \eqref{eq:CosFG} since $g$ is the main actor in the cosine addtion law. 
Here surprisingly two constant period functions exist in terms of the golden ratio.

\begin{theorem}
	Let $g:\R \to (0, +\infty)$ be a regular solution to the exponential Cauchy functional equation \eqref{eq:ExpoCauchy} on $\R$,
	namely there is $a \in (0, +\infty)$  such that $g(x)=a^x$ for all $x \in \R$,
	and $f: \R\to (0, +\infty)$ be defined by $f(x)=g(x+T)$ for all $x \in \R$.
	Then there is no period such that $(g,f)$ is a Cauchy pair for the cosine addition functional equation \eqref{eq:CosFG}.
	Moreover, if the roles of $f$ and $g$ are exchanged, namlely if
	$f(x)=a^x$ and $g(x)=a^{x+T}$, then $(f,g)$ is a Cauchy pair for the cosine addition functional equation
	 if, and only if,
	\begin{equation*}
		T_{1,2}=\log_a \left({\frac{1}{2} \left(1  \pm \sqrt{5} \right)}\right).
	\end{equation*}
\end{theorem}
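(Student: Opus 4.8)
The plan is to insert the explicit forms into the cosine addition equation \eqref{eq:CosFG} and, in each of the two cases, reduce it to a single numerical relation in $a^{T}$, using the fact that a nontrivial exponential is everywhere positive.

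For the first assertion I would take the pair $(g,f)$ with $g(x)=a^{x}$ and $f(x)=g(x+T)=a^{x+T}$ and substitute these into $g(x+y)=g(x)g(y)-f(x)f(y)$. The exponential law rewrites the right-hand side as $a^{x}a^{y}-a^{x+T}a^{y+T}=a^{x+y}-a^{x+y+2T}=a^{x+y}\bigl(1-a^{2T}\bigr)$. Since $a^{x+y}>0$ for all $x,y\in\R$, I may cancel it and obtain $1=1-a^{2T}$, i.e. $a^{2T}=0$. A regular exponential has no zeros and $T$ is real, so this is impossible; hence no period function turns $(g,f)$ into a Cauchy pair for \eqref{eq:CosFG}.

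For the second assertion I would exchange the roles, so $f(x)=a^{x}$ and $g(x)=f(x+T)=a^{x+T}$, and substitute again into $g(x+y)=g(x)g(y)-f(x)f(y)$. This yields $a^{x+y+T}=a^{x+y+2T}-a^{x+y}=a^{x+y}\bigl(a^{2T}-1\bigr)$, and cancelling the positive factor $a^{x+y}$ leaves $a^{T}=a^{2T}-1$. Writing $u:=a^{T}$ this is the quadratic $u^{2}-u-1=0$, whose roots are $u=\tfrac12\bigl(1\pm\sqrt5\bigr)$, the golden ratio and its conjugate; solving $a^{T}=u$ for $T$ gives $T_{1,2}=\log_a\!\bigl(\tfrac12(1\pm\sqrt5)\bigr)$, as claimed. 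The converse (that such $T$ indeed produce a Cauchy pair) is obtained by reading the chain of equalities backwards.

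I do not expect a serious obstacle, since the argument is elementary. The one point that genuinely needs care is the cancellation of $a^{x+y}$: it is legitimate precisely because a nontrivial exponential never vanishes, which is exactly what collapses the functional equation — with no loss or gain of solutions — to a single scalar equation, and hence what makes constant periods possible here, unlike in the additive case. A secondary, purely bookkeeping caveat is that for real $a>0$ only the root $u=\tfrac12(1+\sqrt5)$ gives a bona fide real period, the conjugate root $\tfrac12(1-\sqrt5)<0$ forcing $T$ to be complex; so $T_{1,2}$ is to be read with this understanding (or over $\C$).
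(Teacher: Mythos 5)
Your proposal is correct and follows essentially the same route as the paper: substitute the exponentials into \eqref{eq:CosFG}, cancel the positive factor $a^{x+y}$, and reduce each case to a scalar equation in $q=a^{T}$, namely $1=1-a^{2T}$ (impossible) for $(g,f)$ and $q^{2}-q-1=0$ (golden ratio) for $(f,g)$. Your closing caveat about the negative root $\tfrac12(1-\sqrt5)$ forcing a complex $T$ is also made, parenthetically, in the paper.
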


\begin{proof}
	To proof the second part, take a function $f: \R \to (0, +\infty)$ with $f(x)=a^x$ for some $a \in (0, +\infty)$
	and define $g: \R \to (0, +\infty)$ by $g(x):=f(x+T)$ for some $T: \R^2 \to \R$.
	Assume that the pair $(f,g)$ satisfies \eqref{eq:CosFG}.
	Hence, for all $x,y \in \R$,
	\begin{eqnarray*}
		g(x+y) &=& a^{x+y+T}\\
		& \stackrel{!}{=}& g(x) g(y) - f(x) f(y)\\
		&=&  a^{x+T} a^{y+T} -a^x a^y\\
		&=& a^{x+y+2T} - a^{x+y}\\
		&=&  a^{x+y} \left(a^{2T}-1\right),
	\end{eqnarray*}
	thus $a^T=a^{2T}-1$. 
	Putting $q:=a^T$ we get $q^2-q-1=0$ implying that $q_{1,2}=\frac{1}{2} \left( 1  \pm \sqrt{5}\right)$ is the golden ratio.
	Consequently, $T=-\log_a{2} + \log_a{\left( 1  + \sqrt{5}\right)}$.
	(Allowing also complex-valued period functions, we would also get $T=-\log_a{2} + \log_a{\left( 1  - \sqrt{5}\right)}$).
	Vice versa, namely when looking for exponential Cauchy pairs of the form $(g,f)$ with $g(x)=a^x$ for some $a \in (0, +\infty)$ and $f(x):=g(x+T)$ for
	the cosine equation, from \eqref{eq:CosFG}, we obtain for all $x,y \in \R$,
	\begin{eqnarray*}
		g(x+y) &=& a^{x+y}\\
		& \stackrel{!}{=}& g(x) g(y) - f(x) f(y)\\
		&=&  a^{x} a^{y} -a^{x+T} a^{y+T}\\
		&=& a^{x+y} - a^{x+y+2T}\\
		&=&  a^{x+y} \left(1 - a^{2T} \right),
	\end{eqnarray*}
	implying $1=1 - a^{2T}$ and thus $-a^{2T}=0$, which has no number as solution exept possibly $T=\log_a{0}=-\infty$.
\end{proof}

\subsection{Multiplicative Cauchy equation}
Let $f: (0, +\infty) \to (0, +\infty)$ be defined $f(x)=x^p$ for some $p \in \R$ fixed.
When looking for Cauchy pairs of form $(f,g)$, we determine 
all period functions $T:(0, +\infty)^2 \to (0, +\infty)$ such that $g: (0, +\infty) \to (0, +\infty)$ is defined by $g(x):=f(x+T)$
and the pair $(f,g)$ satisfies \eqref{eq:SinFG}, thus, for all $x,y \in (0, +\infty)$,

\begin{eqnarray*}
	f(x+y) &=& (x+y)^p\\
&=& f(x) g(y) + f(y) g(x)\\
&=& f(x) f(y+T) + f(y) f(x+T)\\
&=&	x^p (y+T)^p + y^p (x+T)^p\\
&=&	\left[x (y+T)\right]^p + \left[y (x+T)\right]^p.
\end{eqnarray*}
To solve here for $T$ is not always possible.
The case $p=1$ appears also in the additive Cauchy equation.
Let us consider $p=2$.
An easy calculation shows that
\begin{align*}
	T(x,y)=\frac{1}{x^2 + y^2} \left[-(x^2y + y x^2) \pm \sqrt{2(xy)^3 + (x+y)^2 (x^2 +y^2)} \right].
\end{align*}
The case $p=3$ is already in\textit{}volved.
It turns out that it is much easier to look for a scaled version of the given function, 
i.e., for given $f(x)=x^p$ find all \textit{scaleabiliy functions} $t: (0, +\infty)^2 \to (0, +\infty)$ such that $g: (0, +\infty)^2 \to (0, +\infty)$ defined by $g(x):=f(tx)$
satisfies \eqref{eq:SinFG}, thus, for all $x,y \in (0, +\infty)$,
\begin{eqnarray*}
	f(x+y) &=& (x+y)^p\\
	&=& f(x) g(y) + f(y) g(x)\\
	&=& f(x) f(ty) + f(y) f(tx)\\
	&=&	x^p (ty)^p + y^p (tx)^p\\
	&=&	2\left[ txy \right]^p.
\end{eqnarray*}
Taking here the $p$-th root on both sides, we obtain $x+y=2^{\frac{1}{p}} xyt$ and thus
\begin{align*}
	t=\frac{x+y}{2^{\frac{1}{p}} xy}.
\end{align*}
Under suitable assumptions on domain and range of the involved functions, scaled and translated versions of a given function are related,
and it is easy to prove the following

\begin{theorem}
	Let $I \subset \R$ be an interval closed under addition, $f, g: I \to (0, +\infty)$ be an (S)- or a $(C)$-pair.
	Then $T: I^2 \to \R$ is a period function for $(f,g)$ 
	iff so is the scaleabiliy function $t:=\exp \circ T$ for  $(f \circ \log, g \circ \log)$.
\end{theorem}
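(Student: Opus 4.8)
The plan is to prove the equivalence by a single change of variables, exploiting that $\exp$ and $\log$ are mutually inverse order isomorphisms. First I would fix notation: since $I$ is an interval closed under addition, the set $J:=\exp(I)\subset(0,+\infty)$ is an interval closed under multiplication, and $\exp\colon I\to J$, $\log\colon J\to I$ are mutually inverse increasing bijections. Hence $f\circ\log$ and $g\circ\log$ are well-defined maps $J\to(0,+\infty)$, and for $u,v\in J$ one has $\log u+\log v=\log(uv)\in I$, while $u\,e^{T}\in J$ whenever $\log u+T\in I$; so every argument occurring below stays inside the relevant domain, which is precisely the ``suitable assumptions'' alluded to before the statement.

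Next I would substitute $x=\log u$, $y=\log v$ (with $u,v\in J$) into the two conditions defining a period function. For an $(S)$-pair, $T$ is a period function for $(f,g)$ exactly when $g=f(\,\cdot\,+T)$ on $I$ and $f(x+y)=f(x)g(y)+f(y)g(x)$ on $I$. The first condition becomes $(g\circ\log)(u)=f(\log u+T)=f\!\left(\log(u\,e^{T})\right)=(f\circ\log)(tu)$ with $t:=\exp\circ T$, which says precisely that $g\circ\log$ is the $t$-scaled version of $f\circ\log$; the second becomes $(f\circ\log)(uv)=(f\circ\log)(u)\,(g\circ\log)(v)+(f\circ\log)(v)\,(g\circ\log)(u)$ for $u,v\in J$, i.e. exactly the requirement that $t$ be a scaleability function for $(f\circ\log,g\circ\log)$. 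Running the same substitution in the reverse direction $u=e^{x}$, and using that $\exp$ and $\log$ are inverse to one another, gives the converse implication and shows that $T\mapsto\exp\circ T$ is a bijection between period functions for $(f,g)$ and scaleability functions for $(f\circ\log,g\circ\log)$. The $(C)$ case is handled verbatim: the companion relation expressing one of $f,g$ as a $T$-shift of the other transforms into the same $t$-scaling, while $g(x+y)=g(x)g(y)-f(x)f(y)$ transforms into $(g\circ\log)(uv)=(g\circ\log)(u)(g\circ\log)(v)-(f\circ\log)(u)(f\circ\log)(v)$ under $x=\log u$, $y=\log v$.

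I do not expect a genuine obstacle here; the computation is routine, as anticipated in the text. The one point needing care is purely bookkeeping: one must read the functional equation satisfied by the composed pair $(f\circ\log,g\circ\log)$ in its multiplicative incarnation $F(uv)=F(u)G(v)+F(v)G(u)$ (respectively $G(uv)=G(u)G(v)-F(u)F(v)$) on $J$, since that is the form the additive equation on $I$ produces under the substitution, and one must check that the shifts, respectively scalings, inherent in a period function, respectively a scaleability function, keep all arguments within $I$, respectively $J$ — both of which hold because $I$ is closed under addition and $J=\exp(I)$ is closed under multiplication. Once this is in place, the chain of equalities above is valid throughout and the stated equivalence follows.
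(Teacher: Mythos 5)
Your proof is correct, and there is nothing in the paper to compare it against: the paper asserts the theorem with the phrase ``it is easy to prove the following'' and supplies no argument at all, so your change-of-variables computation via $x=\log u$, $y=\log v$ on $J:=\exp(I)$ is presumably exactly the intended one. The one substantive point you raise deserves emphasis rather than being tucked into ``bookkeeping'': the equivalence only holds if the functional equation for the composed pair $(f\circ\log,\,g\circ\log)$ is read in the multiplicative incarnation $F(uv)=F(u)G(v)+F(v)G(u)$ (respectively $G(uv)=G(u)G(v)-F(u)F(v)$). The paper's own earlier definition of a scaleability function (in the multiplicative Cauchy subsection) pairs the scaling relation $g=f(t\,\cdot)$ with the \emph{additive} equation $f(x+y)=f(x)g(y)+f(y)g(x)$, and under that literal reading the theorem is false, since $f(\log(u+v))$ does not transform into $f(\log u+\log v)$. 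So your reinterpretation is not optional polish but the correction that makes the statement true; it would be worth stating it as part of the theorem's hypotheses. A smaller notational point: $T$ has domain $I^2$, so the scaleability function acting on $J^2$ is really $(u,v)\mapsto\exp\bigl(T(\log u,\log v)\bigr)$ rather than literally $\exp\circ T$; this is consistent with the paper's declared habit of suppressing variables, but since the whole content of the theorem is a transport of structure between $I$ and $J$, making the reindexing explicit costs one line and removes the last ambiguity.
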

In a nutshell, real translativity is related to positive scaleabiliy, but these problems are not equivalent 
since also the involved functions are not the same, which sadly means also that the knowledge of the scaleability function
for power functions above does not give us automatically the corresponding period function.

\subsection{Sine and Cosine Representer}
The sine and cosine additivity laws involve some degree of freedom in the sense that for a given function, one may calculate the other one.
To make things precise, we start with the following

\begin{definition}
Let $I \subset \R$ be an interval closed under addition and $f: I\to \R$. A function $g: I \to \R$ such that the sine addition law \eqref{eq:SinFG} [the cosine addition law \eqref{eq:CosFG}] holds true is called sine respresentor [cosine representor] of the function $f$.
\end{definition}

Under some conditions, sine and cosine represenator are uniquely determined, which is formulated precisely in the followig two remarks.
\begin{remark}["Away from the zeros the sine representer is unique."] 
Let $f:I \to (0, +\infty)$ be a function and $g: I \to \R$ a sine representaor of $f$, i.e.
\begin{align*}
	f(x+y)=f(x) g(y) + f(y) g(x), \qquad x,y \in I.
\end{align*}
Setting $x=y$, it follows that $f(2x)=2 f(x) g(x)$.
Consequently, the sine representor of $f$ is given by
\begin{align*}
	g(x)=\frac{f(2x)}{2f(x)}, \qquad x \in I.
\end{align*}
Thus, we may write the sine addition functional equations as 
\begin{align*}
	f(x+y)=f(x) \frac{f(2y)}{2f(y)} + f(y) \frac{f(2x)}{2f(x)}, \qquad x,y \in I.
\end{align*}
\label{rem:SinRep}
\end{remark}

\begin{remark} ["The cosine representer is unique."]
	\label{rem:CosRep}
	Let $g:I \to \R$ be a function and $f: I \to \R$ a cosine representer of $g$, i.e.
\begin{align*}
	g(x+y)=g(x) g(y) - f(x) f(y), \qquad x,y \in I.
\end{align*}
Setting $x=y$, it follows $g(2x)=\left(g(x)\right)^2  -  \left(f(x)\right)^2$.
	Consequently, the cosine representer of $g$ is given by
	\begin{align*}
		f(x)=\pm \sqrt{\left(g(x)\right)^2 - g(2x)}, \qquad x \in I.
	\end{align*}
(The sign of the term under the square root is of special interest!)
We may write the cosine addition functional equations as 
\begin{align*}
	g(x+y)=g(x) g(y) - \sqrt{\left(g(x)\right)^2 - g(2x)} \sqrt{\left(g(y)\right)^2 - g(2y)},
\end{align*}
for all $x,y \in I$ such that $\left(\left(g(x)\right)^2 - g(2x)\right) \left( \left(g(y)\right)^2 - g(2y)\right) \geq 0$.
\end{remark}
Note that the moral from the square bracket of the latter remark should rather be:
"The complex-valued cosine representer is unique."
If the cosine representer $f:=g_C$ is assumed to be real-valued,
we need the condition ${\left(g(x)\right)^2 - g(2x)} \geq 0$ to hold on the interval $I$.

Since the quotient of two even functions or two odd functions is odd and sum and difference of even functions is even, we get the following
\begin{remark}
	Let $f:I \to (0,+\infty)$ be an even or an odd function defined on a real interval symmetric with respect to $0$.
	Then its sine represenator $f_S$ is even.
	Moreover, if $f$ is even, then so is its cosine representator $f_C$ .
\end{remark}

\subsection{Sine and Cosine Representer of smooth Cauchy functions}
To get a feeling for sine and cosine representer, we calcute for the regular solutions to one of the Cauchy equations their sine and cosine representer.
The sine representer $f_S$ is given by the formula $f_S (x)=\frac{f(2x)}{2 f(x)}$
and its cosine representer $f_C$ by $f_C (x)=\pm \sqrt{\left(f(x)\right)^2 -f(2x)}$ (see Remark \ref{rem:SinRep} and \ref{rem:CosRep}).

\begin{remark}
	Let $f: \R \to \R$ be a non-trivial regular additive function, which means that there is $c \in \R^*$, such that $f(x)=cx$ for all $x\in\R$.
	Then $f_S (x)= 1$ for $x\in \R^*$ and 
	$f_C (x)= \pm \sqrt {cx (cx-2)}$ for $x \in (-\infty, \min(0, \frac{2}{c})) \cup ( \max(0, \frac{2}{c}), +\infty )$.
	The value at zero of the sine representer $f_S$ is undertermined; by the de L'Hospital rule it is reasonable to put $f_S (0)=1$.
	Also we need to discuss the situation when the radicand is negative for the cosine representer.
\end{remark}

\begin{remark}
	Let $f: \R \to (0,+\infty)$ be a non-trivial regular exponential function, which means that there is a positive real number $a$ such that $f(x)=a^x$ for all $x\in\R$.
	Then $f_S (x)= \frac{1}{2} f(x)$ for $x \in \R$ and 
	$f_C (x)= 0$ for $x \in \R$.
\end{remark}

\begin{remark}
	Let $f: (0,+\infty) \to \R$ be a non-trivial regular logarithmic function; thus there is $c \in \R$ such that $f(x)=c\log{x}$ for all $x\in (0, +\infty)$.
	Then $f_S (x)= \frac{1}{2} \left( \frac{\log{2}}{\log{x}}+1\right)$ for $x \in (0, +\infty)$, $x \neq 1$; 
	moreover, $f_C(1)=\frac{\log{(y+1)}}{\log{y}}$ for $y \in (0, +\infty)$, $y \neq 1$.
	 The cosine representer of $f$ reads
	$f_C (x)= \pm \sqrt{c\log{x} (c\log{x}-1)-c\log{2}}$ for $x \in (0, +\infty)$. 
	If $f_C$ is supposed to be real-valued, more restrictions on the domain are needed.
\end{remark}

\begin{remark}
	Let $f: (0,+\infty) \to (0,+\infty)$ be a non-trivial regular multiplicative function; thus there is $p \in \R$ such that $f(x)={x}^p$ for all $x\in (0, +\infty)$.
	Then $f_S (x)= 2^{p-1}$ for $x \in (0, +\infty)$ and 
	$f_C (x)= 0$ for $x \in (0, +\infty)$. 
\end{remark}

\subsection{Elementary properties of pairability}

It is obvious that pairability with identically vanishing period function $T$ implies $f=g$.

By \eqref{eq:SinFG}, this yields
\begin{equation*}
	f(x+y)=2 f(x) f(y),
\end{equation*}
which means that the function $2f$ is exponential.
Hence, if $f$ and $g$ are trivially pairable (i.e. $T \equiv 0$) with respect \eqref{eq:SinFG}, then $2f$ ($= 2g$) satisfies the Cauchy exponential equation.

Secondly, trivial pairability of $f$ and $g$ with respect to the cosine addition formula \eqref{eq:CosFG} yields
\begin{equation*}
	g(x+y)=0,
\end{equation*}
which means that the function $g$ vanishes identically%
\footnote{Sometimes it is common to introduce some parameter into the cosine addition formula (see for instance \cite{Stetkaer} ) and we deal with $g(x+y)=g(x)g(y)+c f(x)(y))$. In this case trivial pairability yields $f(x+y)=(1+c) f(x) f(y)$ meaning that the function $(1+c) f$ is exponential.}.
On the hand, if the period function of an (possibly non-trivial) $(S)$-pair [$(C)$-pair] has a zero at $(x,y)$,
 then $2f$ is (locally) exponential [$g$ has a zero at $x+y$].

We can formulate pairability with respect to the sine or cosine addition formulas in terms of elementary iterative functional equations.

\begin{remark}
Let $(f,g): I^2 \to \R$	be pairable with respect to the sine addition functional equation \eqref{eq:SinFG}.
By definition, this means that there exists some period function $T: I^2 \to \R$ such that, for all $x,y \in I$,
\begin{eqnarray*}
	f(x+y) &=& f(x) g(y) + f(y) g(x)\\
	&=& f(x) f(y+T) + f(y) f(x+T).
\end{eqnarray*}
Setting here $x=y$, we obtain $f(2x)=2f(x) f(x+T)$ and, on replacing $x$ by $\frac{x}{2}$, thus $f(x)=2 f(\frac{x}{2}) f(\frac{x}{2}+T)$.
Assuming that $f$ has no zeros, we may also conclude $f(x)=\frac{f(2x)}{2f(x+T)}$.
Moreover, we obtain under suitable assumptions (invertibility and, for instance, positivity of $f$) explicitly the period function 
\begin{equation*}
T(x)=f^{-1} (f_S(x))-x.
\end{equation*}
\end{remark}

\begin{remark}
	Let $(g,f): I^2 \to \R$	be pairable with respect to the cosine addition functional equation \eqref{eq:CosFG}.
	By definition, this means that there exists some period function $T: I^2 \to \R$ such that, for all $x,y \in I$,
	\begin{eqnarray*}
		g(x+y) &=& g(x) g(y) - f(x)f(y) \\
		&=& g(x) g(y) - g(x+T) g(y+T).
	\end{eqnarray*}
	Setting here $x=y$, we thus obtain $g(2x)=\left(g(x)\right)^2 - \left(g(x+T)\right)^2$ and, on replacing $x$ by $\frac{x}{2}$, 
	we get $g(x)=\left(g(\frac{x}{2})\right)^2 - \left(g(\frac{x}{2}+T)\right)^2$.
	We may also conclude $g(x)= \pm \sqrt{g(2x)+\left(g(x+T)\right)^2}$
	and, under additional assumptions,
	\begin{equation*}
	T(x)=g^{-1} \left(g_C(x) \right) -x,
	\end{equation*}
where $g_C(x):=\pm \sqrt{(g(x))^2-g(2x)}$ is the cosine representer of $g$.
\end{remark}

Obviously the formulas for the periodicity function presented in the last two remarks 
work only in cases where the period function does not depend on both variables
(compare this to the theorems where the period function was effectively calculated
 for the regular Cauchy pairs).
 
 That pairable functions are periodic in a generalized sense (so with possibly non-constant period) is proved in the following
 \begin{remark}
 Let $(f,g)$ be an $(S)$-pair defined on an interval containing zero with $f(0) \neq 0$, i.e., $g(x)=f(x+T)$ for some period function $T$ and
 \begin{equation*}
 	f(x+y)=f(x) f(y+T)+f(y) f(x+T), \qquad \text{ for all } x,y.
 	\end{equation*}
 Setting here $y=0$, we obtain
\begin{equation*}
	f(x)=f(x) f(T)+f(0) f(x+T), \qquad \text{ for all } x.
\end{equation*}
 and thus, 
 \begin{equation*}
 f(x+T)=	f(x) \frac{1-f(T)}{f(0)}, \qquad \text{ for all } x.
 \end{equation*}
With $c:=\frac{1-f(T)}{f(0)}$ we have
\begin{equation*}
	f(x+T)=	f(x) c, \qquad \text{ for all } x.
\end{equation*}

 Hence, up to the constant $c$ (which is not necessarily a constant), the function $f$ is $T$-periodic.
 In the case $c=1$, which amounts to the condition $f(0)+f(T)=1$, the function $f$ is $T$-periodic in the usual sense.
 
 Note that in general the period $T$ is not assumed to be constant,
 which means that the here stated periodicity ought to be understood in a broader sense.
 
 The same holds for a $(C)$-pair:
 Let $(g,f)$ be an $(C)$-pair defined on an interval containing zero, i.e., $f(x)=g(x+T)$ for some period function $T$ and
 \begin{equation*}
 	g(x+y)=g(x) g(y)+g(x+T) g(y+T), \qquad \text{ for all } x,y.
 \end{equation*}
 Setting $y=0$, we obtain
 \begin{equation*}
 	g(x)=g(x) g(0)+g(x+T) g(T), \qquad \text{ for all } x,
 \end{equation*}
 and thus
 \begin{equation*}
 	g(x+T)=	g(x) \frac{1-g(0)}{g(T)}, \qquad \text{ for all } x.
 \end{equation*}

 Hence, up to a constant, $g$ is $T$-periodic.
 If $g(0)+g(T)=1$, we have usual periodicity; otherwise it should be understood in a generalized sense.
 
 \end{remark}

\subsection{Pairability as an  Equivalence Relation}
It may seem very natural to ask whether pairability of functions has the usual properties of an equivalence relation.
We start with symmetry, and thus ask for a characterization of the situation when a function $f$ is pairable with $g$ implies that
 $g$ is pairable with $f$. 
 In a nutshell, only for trivially pairable functions the notion of pairability is symmetric.
 We prove this pars pro toto for the sine addition formula in the following

\begin{remark}
Let $f$ be pairable with $g$ with respect to the sine addition functional equation.
 Thus, there is a period function $T$ such that
$g(x)=f(x+T)$ for all suitable $x$, and, on substituting here $x$ by $x-T$, we get $f(x)=g(x-T)$. 
By  \eqref{eq:SinFG}, we have, for all $x,y$ from the domain of $f$,
\begin{equation*}
f(x+y)= f(x) f(y+T)+ f(y) f(x+T).	
\end{equation*}
Hence, 
\begin{equation*}
	g(x+y-T)= g(x-T) g(y)+ g(y-T) g(x).	
\end{equation*}
If we assume on the other hand that $g$ is pairable with $f$ (with respect to (S), of couse), we get that there is some $\bar{T}$ such that
\begin{equation*}
	g(x+y)=  g(x) g(y+\bar{T}) +g(y)g(x+\bar{T}).	
\end{equation*}
The right hand sides here have the same form; setting $T=-\bar{T}$, they coincide. 
But at the left hand side of the previous equation there is a problem unless ${T}=0$.
Thus, $(f,g)$ is an $(S)$-pair implies that $(g,f)$ is an $(S)$-pair only in the case $f=g$, 
 thus in the case of trivial pairability. 
By \eqref{eq:SinFG}, this means that the function $2f$ is exponential.
\end{remark}
The analoguous result for the cosine functional equation holds true.

\section{Periodicity of sine representers}
Away from the zeros of $f$, the sine representer is given by
$f_S (x):=g(x)= \frac{f(2x)}{2 f(x)}$.
On the other hand, when dealing with pairable functions, the second function $g$ is defined in terms of the first by $g(x):=f(x+T)$
and the question when sine representers of pairable functions are $T$-periodic amounts to the equation
\begin{equation}
	\frac{f(2x)}{2 f(x)} = f(x+2T).
	\label{eq:periodicitySineRep}
\end{equation}
(Alternatively we could also  work with the 'balanced version' of the equation given by $\frac{f(2(x+T))}{2f(x+T)}=f(x+T)$.)
Analogously, for the cosine representer we have  $g_C(x):=f(x)= \pm \sqrt{(g(x))^2 -g(2x) }$ (cf. Remark \ref{rem:SinRep} and \ref{rem:CosRep}),
and on the other hand, when assuming that $g$ is pairable with $f$, 
which means that there exists some (possibly non-constant) $T$ such that $f(x)=g(x+T)$ for all suitable $x$,
we may thus express the $T$-periodicity of $f$ by
\begin{equation}
	\pm \sqrt{(g(x))^2 -g(2x) } = g(x+2T).
	\label{eq:periodicityCosineRep}
\end{equation}
(Just as above, we could also use the equation $\pm \sqrt{(g(x+T))^2 -g(2(x+T)) } = g(x+T)$ for all suitable $x$ 
to express the $T$-periodicity of the cosine representer $g_C=f$, but we prefer not to, 
since this  would introduce even more difficulties regarding domains.)

Our next remark deals with the periods of the sine representers in connection with regular non-trivial%
\footnote{Constant functions satisfy all four Cauchy equations. Thus, we assume here without loss of generality that all occuring constants are real and non-zero. This should also be clear since these numbers often appear in the denominators.} 
Cauchy pairs.
\begin{remark}
	Let $c, p, a \in \R \setminus \{ 0 \}$, $a>0$, be fixed.
	\begin{enumerate}
		\item
	The power function $f(x)=x^p$ on $(0,+\infty)$ is pairable  with its sine representer $g(x)=2^{p-1}$, 
	 if $T=\frac{1}{\sqrt[p]{2}} -\frac{x}{2}$.
	\item
	The regular additive function $f(x)=cx$ on $\R \setminus \{0\}$ is pairable with its sine representer $g(x)=1$, 
	if $T=\frac{1}{2} \left(\frac{1}{c} - x\right)$.
	
	\item
	The regular logarithmic function $f(x)=c \log{x}$ on $(0, +\infty)$ is pairable with its sine representer 
  $g(x)=\frac{1}{2} \left( \frac{\log{2}}{\log{x}}+1  \right)$, 
	if $T=\frac{1}{2} \left(\e^{\frac{1}{2c} \left(\frac{\log{2}}{\log{x}}+1 \right)}-x \right)$.
	
	\item
	The regular exponential function $f(x)=a^{x}$ on $\R$ is pairable with its sine representer 
	$g(x)=\frac{1}{2} a^{x}$, 
	if $T=-\frac{1}{2} \log_a{2}$.

	\end{enumerate}
\end{remark}

Similarly, we have for the cosine representers the following
\begin{remark}
	Let $c, p, a \in \R \setminus \{0\}$, $a>0$, be fixed.
	\begin{enumerate}
		\item
		The power function $g(x)=x^p$ on $(0,+\infty)$ is pairable with its cosine representer $f(x)=0$, 
		if $T=-\frac{1}{2} x$.
		\item
		The regular additive function $g(x)=cx$ on $\R \setminus \{0\}$ is pairable with its cosine representer $f(x)=\pm \sqrt{(cx)^2- c 2x}$, 
		if $T=\frac{1}{2} \left(-x \pm   | {x} |  \sqrt{1-\frac{1}{c}} \right)$.
		\item
		The regular logarithmic function $g(x)=c \log{x}$ on $(0, +\infty)$ is pairable with its cosine representer 
		$f(x)=\pm \sqrt{(c\log{x})^2-c\log(2x)}$, 
		if $T=\frac{1}{2} \left( \e^{\pm   \sqrt{(\log{x})^2- \frac{1}{c}\log{ (2x)}}}-x \right)$.
		\item
		The regular exponential function $g(x)=a^{x}$ on $\R$ is pairable with its cosine representer 
		$f(x)=0$, 
		if $T=-\infty$.
	\end{enumerate}
\end{remark}

\begin{proof}
	\begin{enumerate}
	\item
Assume that $g$ being the the given power function is pairable with its cosine representer $f$.
By \eqref{eq:periodicityCosineRep} we have
\begin{equation*}
	0=(x+2T)^p,
\end{equation*}
and thus $T=-\frac{1}{2} x$ as claimed.
	\item
Assume that the smooth additive function $g:\R^* \to \R$ defined by $g(x)=cx$ is pairable with the cosine representer $f$.
	By \eqref{eq:periodicityCosineRep} we have
	\begin{equation*}
\pm \sqrt{(cx)^2- c 2x}=c(x+2T).
	\end{equation*}
	Squaring both sides gives us
	\begin{equation*}
		{(cx)^2-  2cx}=c^2(x^2+4xT +4T^2),
	\end{equation*}
and thus
\begin{equation*}
	4c^2 T^2 +4xc^2 T+2cx=0.
\end{equation*}
Since we assumed $c\neq 0$, we may devide here by $4c^2$ to yield
\begin{equation*}
	 T^2 +x T+\frac{x}{2c}=0.
\end{equation*}
This gives us
	$T=\frac{1}{2} \left(-x \pm   | {x} |  \sqrt{1-\frac{1}{c}} \right)$.	
		\item
	Assume that the smooth logarithmic function $g : (0, +\infty) \to \R$ defined by $g(x)=c \log{x}$ is pairable with the sine representer $f$.
	By \eqref{eq:periodicityCosineRep} we have
	\begin{equation*}
		\pm \sqrt{(c\log{x})^2- c\log{ (2x)}}=c \log{(x+2T)},
	\end{equation*}
	and hence $T=\frac{1}{2} \left( \e^{\pm   \sqrt{(\log{x})^2- \frac{1}{c}\log{ (2x)}}}-x \right)$.
	\item
	Assume that the smooth exponential function $g: \R \to (0, +\infty) \to $ defined by $g(x)=a^{x}$ is pairable with the sine representer $f$.
	By \eqref{eq:periodicityCosineRep} we have
	\begin{equation*}
		0= a^{x+2T}.
	\end{equation*}
	Thus $a^{2T}=0$ and $T=-\infty$.
\end{enumerate}
\end{proof}

\section{Conclusion}
The notion of pairability of functions opens a broad field of research
not limited to only functional equations. Of course, also other additivity laws may be the starting point for new and fruitful investigations.
We hope to have shown with this paper 
that exploring the properties of functions having a (possibly non-constant) period induced by some additivity law or other suitable conditions
is indeed of great interest.
Up to now, we can not fully exclude that the devellopment of a certain duality theory based on the notion of pairability with respect to some additivity law
will give deep insights to some of the most difficult unsolved problems in mathematics.

We thank the reader for helpful remarks and comments.

********************************************************************

\end{document}